\newtheorem{prop}{Proposition}
\newtheorem{thm}[prop]{Theorem}
\newtheorem{defi}[prop]{Definition}
\renewcommand{\phi}{\varphi}
\renewcommand{\epsilon}{\varepsilon}
\begin{document}
\title[A priori estimates of the degenerate Monge-Ampere equation]{A priori estimates of the degenerate Monge-Ampere equation on Kahler manifolds of nonnegative bisectional curvature}
\author{Sebastien Picard}
\address{Department of Mathematics\\
         McGill University\\
         Montreal, Quebec. H3A 2K6, Canada.}
\email{sebastien.picard@math.mcgill.ca}
\begin{abstract}
The regularity theory of the degenerate complex Monge-Amp\`{e}re equation is studied. The equation is considered on a closed compact K\"{a}hler manifold $(M,g)$ with nonnegative orthogonal bisectional curvature of dimension $m$. Given a solution $\phi$ of the degenerate complex Monge-Amp\`{e}re equation $\det(g_{i \bar{j}} + \phi_{i \bar{j}}) = f \det(g_{i \bar{j}})$, it is shown that the Laplacian of $\phi$ can be controlled by a constant depending on $(M,g)$, $\sup f$, and $\inf_M \Delta f^{1/(m-1)}$.
\end{abstract}

\date{}
\maketitle

\section{Introduction}

\indent We will be looking at the regularity theory of the degenerate complex Monge-Amp\`{e}re equation. Let us consider the equation on a compact K\"{a}hler manifold $(M,g)$ without boundary of dimension $m$. The problem of solving the complex Monge-Amp\`{e}re equation on $M$ was first motivated by the Calabi conjecture. The conjecture was reduced to solving a non-degenerate Monge-Amp\`{e}re equation, and the question was solved by Yau in ~\cite{yau}. This result had significant geometric implications, in particular, leading to the theory of Calabi-Yau manifolds, which now plays a central role in string theory and complex geometry. One of the major steps in the proof of the Calabi conjecture was establishing an {\it a priori} estimate on the Laplacian of the solution; this was done independently by Aubin ~\cite{aubin} and Yau ~\cite{yau}. 
\newline
\indent Although the Calabi conjecture deals with a non-degenerate Monge-Amp\`{e}re equation, Yau's paper ~\cite{yau} also treated the degenerate Monge-Amp\`{e}re equation, with an application to holomorphic sections of line bundles over $M$. More recently, the results of Yau were generalized by Kolodziej ~\cite{kol}. The degenerate Monge-Amp\`{e}re equation in complex geometry has become an active area of research, with connections to the Minimal Model Program ~\cite{EGZ, BEGZ}, or geodesics joining two K\"{a}hler potentials in the space of K\"{a}hler metrics ~\cite{donaldson, chen, phong_sturm}. For a survey of some of these topics, see ~\cite{phong_song_sturm}.
\newline
\indent In this article, we shall consider the following complex Monge-Amp\`{e}re equation:
\begin{equation} \label{mongeampere} \det (\phi_{i \bar{j}} + g_{i \bar{j}}) = f \det g_{i \bar{j}} ,\end{equation}
\noindent where $f: M \rightarrow \mathbb{R}$ and $f \geq 0$.
\newline
\indent The objective is the following: given a solution $\phi$ to \eqref{mongeampere} such that $(\phi_{i \bar{j}} + g_{i \bar{j}})$ is positive semi-definite, we seek an estimate $|\Delta \phi| \leq C$ depending only on $(M,g)$, $\sup f$ and a constant $A$ such that
\begin{equation} \label{la_condition} \inf_M \Delta f^{\frac{1}{m-1}} \geq -A.\end{equation}
\indent This problem is motivated by a similar result obtained for the real Monge-Amp\`{e}re equation by P. Guan in ~\cite{guan}. Previously, a result on K\"{a}hler manifolds was obtained by Blocki in ~\cite{blocki2} while assuming that $f^{\frac{1}{m-1}} \in C^{1,1}$. Later, Blocki improved his result in ~\cite{blocki} to requiring the assumption that $f^{\frac{1}{m-1}} \Delta (\log f)$ is bounded below, which is equivalent to 
$$ f \Delta f - |\nabla f|^2 \geq -A f^{2-\frac{1}{m-1}}.$$
\indent In comparison, our desired condition \eqref{la_condition} is equivalent to
$$ f \Delta f - \frac{m-2}{m-1}|\nabla f|^2 \geq - Af^{2-\frac{1}{m-1}}.$$
\indent We solve the problem in the case when $M$ has nonnegative orthogonal bisectional curvature. Similar results were obtained by Hou in ~\cite{hou} by obtaining a Laplacian estimate for complex Hessian equations depending on $\inf_M \Delta f^{\frac{1}{m}}$ in the case of nonnegative orthogonal bisectional curvature. The main difficulty in improving the exponent from $1/m$ to $1/(m-1)$ is that we can no longer discard terms by using the concavity of $(\det B)^{1/m}$, and the resulting third-order terms must be handled carefully. 
\newline
\indent We use the following definition.
\begin{defi} A K\"ahler manifold $(M,g)$ is said to have nonnegative orthogonal bisectional curvature if at each point $p \in M$, for any orthonormal basis $\{e_1, \dots, e_m \}$ of the tangent space at $p$, we have  $R(e_i,\overline{e_i},e_j,\overline{e_j}) = R_{i \bar{i} j \bar{j}} \geq 0$.
\end{defi}
\
\indent Manifolds satisfying this curvature condition are well-understood; indeed, compact K\"{a}hler manifolds of non-negative orthogonal bisectional curvature were classified by Gu and Zhang in ~\cite{gu_zhang}. Our main theorem is the following.
\begin{thm} \label{thm-pos-bisec} Let $(M,g)$ be a compact K\"ahler manifold with nonnegative orthogonal bisectional curvature and empty boundary. Let $f > 0$ be a function on $M$ such that $\inf_M \Delta f^{\frac{1}{m-1}} \geq -A$ for some constant $A$. For all solutions $\phi \in C^4(M)$ of 
$$ \det ( g_{i \bar{j}} + \phi_{i \bar{j}} ) = f \det g_{i \bar{j}},$$
\noindent such that $(\phi_{i \bar{j}} + g_{i \bar{j}})$ is positive definite, we have
$$ (\sup_M \phi - \inf_M \phi) + || \nabla \phi||_{\infty} + || \Delta \phi ||_{\infty} \leq C,$$
\noindent where $C$ depends on $(M,g)$, $A$, and $\sup f$. 
\end{thm}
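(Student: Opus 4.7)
The plan separates into three pieces, of which the Laplacian estimate is the substantive one. For the $C^0$ oscillation $\sup\phi - \inf\phi \leq C(M,g,\sup f)$, I would invoke Kolodziej's $L^\infty$ pluripotential estimate for the complex Monge--Amp\`{e}re equation, which applies since $f$ is bounded above; after normalizing $\sup\phi = 0$, this yields $\inf\phi \geq -C$. The gradient bound $\|\nabla\phi\|_\infty \leq C$ will be recovered at the end: once $\|\Delta\phi\|_\infty$ is known, standard elliptic theory places $\phi \in W^{2,p}$ for every $p$, hence $\phi \in C^{1,\alpha}$ by Sobolev embedding, with all constants controlled by $(M,g)$, $A$, and $\sup f$.

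For the Laplacian bound, the plan is to apply the maximum principle to the auxiliary function
$$G \;=\; \log(m + \Delta \phi) \;-\; C_0\,\phi,$$
with $C_0$ large, to be chosen. At the maximum point $p_0$, I pick complex coordinates simultaneously diagonalizing $g$ and $g' := g + i\partial\bar\partial\phi$, with eigenvalues $\lambda_i = 1 + \phi_{i\bar i}(p_0)$. Differentiating $\log\det g' = \log f$ twice and commuting covariant derivatives gives
$$\Delta'(\Delta\phi) \;=\; \Delta \log f \;+\; \mathcal{T} \;+\; \mathcal{R}, \qquad \mathcal{T} \;=\; \sum_{k,p,q}\frac{|\phi_{p\bar q k}|^2}{\lambda_p\lambda_q},$$
where $\Delta'$ is the $g'$-Laplacian and $\mathcal{R}$ is a curvature contribution that is nonnegative modulo a controllable constant under the nonnegative orthogonal bisectional curvature hypothesis (this is where the curvature assumption enters). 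Combining with the identity $\Delta'\log(m+\Delta\phi) = \Delta'(\Delta\phi)/(m+\Delta\phi) - |\nabla'\Delta\phi|^2/(m+\Delta\phi)^2$ and $\Delta'\phi = m - \sum_i 1/\lambda_i$, the condition $\Delta' G(p_0) \leq 0$ rearranges to
$$C_0 \sum_i \frac{1}{\lambda_i} \;\leq\; C_0 m \;+\; \frac{|\nabla'\Delta\phi|^2}{(m+\Delta\phi)^2} \;-\; \frac{\Delta\log f + \mathcal{T} + \mathcal{R}}{m+\Delta\phi}.$$

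The essential new idea, needed to push the exponent from Hou's $1/m$ to $1/(m-1)$, is to retain $\mathcal{T}$ rather than discard it via concavity of $(\det)^{1/m}$. Setting $u := f^{1/(m-1)}$,
$$\Delta \log f \;=\; (m-1)\frac{\Delta u}{u} \;-\; (m-1)\frac{|\nabla u|^2}{u^2} \;\geq\; -\frac{(m-1)A}{u} \;-\; (m-1)\frac{|\nabla u|^2}{u^2}.$$
Cauchy--Schwarz applied to the once-differentiated equation $(\log f)_k = \sum_p \phi_{p\bar p k}/\lambda_p$ bounds $|\nabla u|^2/u^2$ by a multiple of $\mathcal{T}$, while the critical equation $\partial_k\Delta\phi = C_0(m+\Delta\phi)\phi_k$ converts the drift $|\nabla'\Delta\phi|^2/(m+\Delta\phi)^2$ into $C_0^2|\nabla'\phi|^2$, which is similarly dominated by $\mathcal{T}/(m+\Delta\phi)$ (if needed, $G$ may be augmented by an $|\nabla\phi|^2$ term to sharpen the constants). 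After the absorptions the inequality collapses to
$$C_0 \sum_i \frac{1}{\lambda_i} \;\leq\; C_0 m \;+\; \frac{B}{u\,(m+\Delta\phi)},$$
and the combinatorial lower bound
$$\sum_i \frac{1}{\lambda_i} \;\geq\; \frac{m-1}{m^{1/(m-1)}\,u}\,(m+\Delta\phi)^{1/(m-1)},$$
obtained by AM--GM on the $m-1$ smallest reciprocals together with $\lambda_{\max} \geq (m+\Delta\phi)/m$ and $f = u^{m-1}$, then forces $m+\Delta\phi$ to be bounded at $p_0$, hence globally via $G$.

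The main obstacle is precisely the sharp accounting in the absorption step. The Cauchy--Schwarz bounds required to dominate both the drift term and the bad piece $(m-1)|\nabla u|^2/u^2$ of $\Delta\log f$ by $\mathcal{T}$ are essentially tight, and the exponent $1/(m-1)$ is exactly what matches the surplus available in $\mathcal{T}$. A direct substitution leaves a residual positive multiple of $\mathcal{T}$ on the wrong side, so closing the argument requires careful exploitation of both the diagonal and off-diagonal contributions to $\mathcal{T}$, and the precise choice of test function $G$ matters. Disposing of the curvature term $\mathcal{R}$ using only the orthogonal bisectional bound, as opposed to a full lower bound on the bisectional curvature, is a secondary point requiring care.
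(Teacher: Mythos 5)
Your reduction is correct up to the crucial point, and then the crucial point is left open. Rewriting $\Delta\log f$ in terms of $u=f^{1/(m-1)}$ and using the once-differentiated equation, the quantity you must dominate at the maximum point is exactly, for each fixed $k$,
\begin{equation*}
\frac{1}{m-1}\Big|\textstyle\sum_i g'^{i\bar i}\phi_{i\bar i k}\Big|^2-\sum_{i,j}g'^{i\bar i}g'^{j\bar j}|\phi_{i\bar j k}|^2 ,
\end{equation*}
i.e.\ the combination \eqref{key_terms} of the paper. You correctly observe that a direct Cauchy--Schwarz leaves a residual positive multiple of $\mathcal{T}$ on the wrong side (the constant comes out as $\tfrac{m}{m-1}>1$), but you do not supply the device that closes this gap --- and that device is the entire content of the theorem, as the introduction itself points out (''the main difficulty in improving the exponent from $1/m$ to $1/(m-1)$''). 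The paper closes it with Guan's sign-splitting argument: drop the off-diagonal part, write $\phi_{i\bar i k}$ in terms of the real derivatives $\phi_{i\bar i x},\phi_{i\bar i y}$, and split the indices according to the sign of $\phi_{i\bar i x}(p)$. If both signs occur, each block has at most $m-1$ indices and $(\sum a_i)^2\le (m-1)\sum a_i^2$ makes the expression nonpositive; if all signs agree, the critical-point identity $(\Delta\phi)_k=\alpha'\phi_k(m+\Delta\phi)$ bounds every $\phi_{i\bar i x}$ by $2\alpha'|\nabla\phi|(m+\Delta\phi)$ and the expression is bounded by $C(\alpha')^2|\nabla\phi|^2(m+\Delta\phi)\sum_i g'^{i\bar i}$. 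Nothing is ever absorbed back into $\mathcal{T}$.

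Two further points in your outline would fail as stated. First, the claim that the drift term, converted to $C_0^2|\nabla'\phi|^2$, is ''dominated by $\mathcal{T}/(m+\Delta\phi)$'' is unjustified: no gradient bound is available at this stage of the argument, and $\mathcal{T}$ does not control $|\nabla\phi|^2$ from below. Second, with your linear exponent $G=\log(m+\Delta\phi)-C_0\phi$ the leftover $(\alpha')^2|\nabla\phi|^2(m+\Delta\phi)\sum_i g'^{i\bar i}$ terms (from the drift and from the same-sign case above) have no negative partner, since $\alpha''=0$. The paper's resolution, following Blocki, is to normalize $\phi\in[2,\lambda]$ and take $\alpha(x)=C_0^{-1}\log x$ with $C_0=1+4m^2/(m-1)$, so that $\alpha''+C_0(\alpha')^2=0$ kills all $|\nabla\phi|^2$ terms identically while $\alpha'\ge 1/(C_0\lambda)>0$ preserves the good term $-\alpha'(m+\Delta\phi)\sum_i g'^{i\bar i}$, which combined with $\sum_i g'^{i\bar i}\ge((m+\Delta\phi)/f)^{1/(m-1)}$ gives the superlinear term that forces the bound. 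Your peripheral steps ($C^0$ bound via Kolodziej rather than the reference to Siu, gradient bound from the Laplacian bound by elliptic theory, the AM--GM inequality at the end, and the treatment of the curvature term under nonnegative orthogonal bisectional curvature) are consistent with the paper, but as it stands the proposal identifies the obstacle without overcoming it.
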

\
\indent Although this theorem assumes $f>0$, such an estimate is applicable to the degenerate case when $f \geq 0$ via a limiting process. This shall be illustrated in Section 3, which contains a proof of the following application.
\newline
\indent Using the {\it a priori estimates}, we solve the Dirichlet problem for the degenerate complex Monge-Amp\`{e}re equation on a domain $\Omega$ in $\mathbb{C}^m$. This type of question was investigated by Bedford and Taylor in ~\cite{bedford_taylor}, and Caffarelli, Kohn, Niremberg and Spruck in ~\cite{ckns}. The problem is of the following form:
\begin{equation} \label{thm3} \det u_{i \bar{j}}(z) = f(z) \ \ {\rm in } \ \Omega,\end{equation}
$$ u = 0 \ \ {\rm on} \ \partial \Omega.$$
\indent Before stating our result, we first establish some terminology. We say a real-valued function $u$ is pluri-subharmonic if $(u_{i \bar{j}})$ is positive semi-definite. We say a real-valued function $u$ is strictly pluri-subharmonic if $(u_{i \bar{j}})$ is positive definite. Following the terminology of ~\cite{blocki2}, if $|\Delta u|$ is bounded, we say $u$ is almost $C^{1,1}$. A domain $\Omega \subset \mathbb{C}^m$ with smooth boundary $\partial \Omega$ is called strongly pseudo-convex if there exists a smooth real-valued function $r$ defined on a neighbourhood of $\overline{\Omega}$ such that $r<0$ in $\Omega$, $r=0$ on $\partial \Omega$, $r>0$ outside of $\overline{\Omega}$, $dr \neq 0$, and $(r_{i \bar{j}}(z))$ is positive-definite at each point in its domain. With these definitions in place, we can now state the following result.
\begin{thm} \label{dirichlet}
Let $\Omega$ be a bounded strongly pseudo-convex domain in $\mathbb{C}^m$. Let $f: \Omega \rightarrow \mathbb{R}$ be a function such that $f \geq 0$, $|\nabla f^{1/m}| \leq A_1$, and $\Delta f^{\frac{1}{m-1}} \geq -A_2$. Then there exists a unique pluri-subharmonic, almost $C^{1,1}$ solution $u$ such that 
$$ \det u_{i \bar{j}}(z) = f(z) \ \ {\rm in } \ \Omega,$$
$$ u = 0 \ \ {\rm on} \ \partial \Omega.$$
\indent Furthermore, $||u||_{C^1(\bar{\Omega})} + ||\Delta u||_\infty \leq C$, where $C$ depends only on $A_1$,$A_2$, $\sup(f)$ and $\Omega$.
\end{thm}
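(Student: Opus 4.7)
The plan is to regularize $f$ into a sequence of smooth strictly positive functions $f_\epsilon$ preserving the hypotheses of the theorem, solve the associated non-degenerate Dirichlet problem via Caffarelli-Kohn-Nirenberg-Spruck \cite{ckns} to obtain smooth plurisubharmonic solutions $u_\epsilon$, establish uniform estimates on $||u_\epsilon||_{C^1(\bar{\Omega})} + ||\Delta u_\epsilon||_\infty$, and pass to the limit $\epsilon \to 0$.

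For the regularization, writing $h := f^{1/(m-1)}$, the condition $\Delta h \geq -A_2$ is preserved under convolution with a standard mollifier $\rho_\epsilon$. I would set $h_\epsilon := \rho_\epsilon \ast \tilde{h} + \epsilon$, for a suitable extension $\tilde{h}$ of $h$ across $\partial \Omega$, and define $f_\epsilon := h_\epsilon^{m-1}$; one checks that $f_\epsilon \geq \epsilon^{m-1} > 0$ is smooth with $\Delta f_\epsilon^{1/(m-1)} \geq -A_2$, that $|\nabla f_\epsilon^{1/m}|$ is uniformly bounded in $\epsilon$, and that $\sup f_\epsilon \to \sup f$. Then \cite{ckns} produces a smooth plurisubharmonic solution $u_\epsilon \in C^\infty(\bar{\Omega})$ of $\det (u_\epsilon)_{i \bar{j}} = f_\epsilon$ in $\Omega$ with $u_\epsilon = 0$ on $\partial \Omega$.

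The heart of the argument is the uniform estimate $||u_\epsilon||_{C^1(\bar{\Omega})} + ||\Delta u_\epsilon||_\infty \leq C$ depending only on $A_1$, $A_2$, $\sup f$, and $\Omega$. The $C^0$-bound follows from comparison of $u_\epsilon$ against a strictly plurisubharmonic barrier built from the defining function of $\Omega$. The gradient estimate comes from a standard maximum-principle argument on a quantity of the form $|\nabla u_\epsilon|^2 e^{\psi(u_\epsilon)}$, using the uniform Lipschitz bound on $f_\epsilon^{1/m}$. For the Laplacian, the interior estimate is the flat-space analogue of Theorem \ref{thm-pos-bisec}: since $\Omega \subset \mathbb{C}^m$ has vanishing curvature, the orthogonal bisectional curvature hypothesis is trivial, and the same test function yields $\sup_\Omega \Delta u_\epsilon$ in terms of $A_2$, $\sup f$, $\sup_\Omega |\nabla u_\epsilon|$, and $\sup_{\partial \Omega} \Delta u_\epsilon$. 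The boundary estimate for $\Delta u_\epsilon$ is supplied by the CKNS barrier construction, which depends only on $\sup f$ and the strong pseudoconvexity of $\Omega$.

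Given these uniform bounds, Arzel\`a-Ascoli extracts a subsequence converging in $C^{1,\alpha}(\bar{\Omega})$ to an almost $C^{1,1}$ plurisubharmonic $u$ with $u = 0$ on $\partial \Omega$ and $||u||_{C^1} + ||\Delta u||_\infty \leq C$. The continuity of the complex Monge-Amp\`ere operator under such convergence (Bedford-Taylor \cite{bedford_taylor}) shows $\det u_{i \bar{j}} = f$ almost everywhere, and uniqueness in this class follows from the comparison principle for bounded plurisubharmonic functions. The main obstacle will be adapting the interior Laplacian argument of Theorem \ref{thm-pos-bisec} to the Dirichlet setting, since the closed-manifold proof evaluates its test function at a global maximum: one must either match the argument to the CKNS boundary estimate at the boundary or introduce a cutoff, while still keeping the delicate third-order terms of the original proof under control.
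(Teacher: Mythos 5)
Your proposal follows essentially the same route as the paper: regularize $f^{1/(m-1)}$ by mollification plus an $\epsilon$-lift, solve the nondegenerate problem via \cite{ckns}, establish uniform a priori bounds (barriers for $C^0$ and boundary derivatives, the test function of Theorem~\ref{thm-pos-bisec} in the flat curvature-free setting for the interior Laplacian, requiring $f^{1/m}$ Lipschitz for the boundary second-order estimate), and pass to the limit. The boundary-versus-interior issue you flag as the main remaining obstacle is resolved in the paper exactly as you anticipate: one simply splits on whether $H = (m+\Delta\phi)e^{-\alpha(\phi)}$ attains its maximum on $\partial\Omega$ or in the interior, bounding by the CKNS boundary Laplacian estimate in the first case and running the Proposition~\ref{my_only_result} computation with $g_{i\bar{j}} = \delta_{i\bar{j}}$ in the second, so no cutoff argument is needed.
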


 \section*{Acknowledgements}
I would like to thank Prof. Pengfei Guan for suggesting this problem, and for his invaluable advice and guidance. I would also like to thank Xiangwen Zhang for pointing out several typos in the draft of this article.

\section{Preliminaries}

\indent In this section, we establish some notation and recall previous results relating to the proof of Theorem \ref{thm-pos-bisec}. First, we remind the reader that the constant $C$ denotes a positive quantity that is under control, and may change line by line. We will use the convention $\phi_{i \bar{j}} = \frac{\partial^2 \phi}{\partial z^i \partial \bar{z}^j}$; as opposed to ~\cite{yau}, subscripts do not indicate covariant derivatives. Also, for a function $h: M \rightarrow \mathbb{C}$, we will use any of the following notation interchangeably: $\frac{\partial h}{\partial z^i}$, $\partial_i h$, and $h_i$. We shall denote
\begin{equation} \label{g_primeprime} g'_{i \bar{j}} := g_{i \bar{j}} + \phi_{i \bar{j}}. \end{equation}
\indent It is well-known, as seen for example in the exposition of ~\cite{siu}, that for Theorem \ref{thm-pos-bisec} we have an {\it a priori} estimate 
$$ (\sup_M \phi - \inf_M \phi) \leq C.$$
\indent The objective of this article is to estimate $|\Delta \phi| \leq C$ directly from the uniform bound. Assuming such a bound on the Laplacian, we can obtain a bound on the gradient. Indeed, if we look at $\Delta \phi(z) := G(z)$, then $|G(z)| \leq C$. Then by the Schauder estimates
\begin{equation} \label{grad_est}
\sup_M | \nabla \phi| \leq C_0 (||G||_\infty + ||\phi||_2) \leq C.
\end{equation}
\indent Furthermore, we can easily obtain a lower bound on $\Delta \phi$. Since $g'_{i \bar{j}} = g_{i \bar{j}} + \phi_{i \bar{j}}$ is positive definite, we have $0 < Tr (g_{i \bar{j}} + \phi_{i \bar{j}})$. At any point $p \in M$, we may choose coordinates such that $g_{i \bar{j}} = \delta_{i \bar{j}}$. Thus
\begin{equation} \label{laplacian_phi} 0 < m + \Delta \phi, \end{equation}
\noindent and it only remains to bound $\Delta \phi$ from above.
\
\newline
\section{Second Order Estimate}

\indent As shown in the previous section, Theorem \ref{thm-pos-bisec} will following from the following estimate.

\begin{prop} \label{my_only_result} Let $(M,g)$ be a closed, compact K\"ahler manifold with nonnegative orthogonal bisectional curvature. Let $f>0$ be a positive function on $M$ such that $\inf_M \Delta f^{\frac{1}{m-1}} \geq -A$ for some constant $A$. For all $\phi \in C^4(M)$ satisfying \eqref{mongeampere} such that $(\phi_{i \bar{j}} + g_{i \bar{j}})$ is positive-definite, we have
$$ | \Delta \phi | \leq C,$$
\noindent where $C$ depends on $(M,g)$, $(\sup \phi - \inf \phi)$, $A$, and $\sup f$. 
\end{prop}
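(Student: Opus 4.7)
The plan is to apply the maximum principle to a test function of the form $H = \log(m+\Delta\phi) - \lambda\phi$ for a constant $\lambda>0$ to be chosen large at the end. Since $\phi$ has bounded oscillation on $M$, $H$ attains its supremum at some $p_0\in M$. At $p_0$ I choose coordinates in which $g_{i\bar j}(p_0)=\delta_{ij}$ and $\phi_{i\bar j}(p_0)$ is diagonal, so that $g'_{i\bar j}(p_0)=\lambda_i\delta_{ij}$ with $\prod_i\lambda_i = f(p_0)$ and $u := m+\Delta\phi = \sum_i\lambda_i$. Differentiating the Monge-Amp\`ere equation $\log\det g' = \log f + \log\det g$ twice gives
\[
\Delta' u = \Delta\log f - R + \sum_{i,p}\frac{R_{p\bar p i\bar i}}{\lambda_p} + \sum_{A,B,C}\frac{|\phi_{A\bar B C}|^2}{\lambda_A\lambda_B},
\]
and combining $\Delta'\log u = \Delta'u/u - |\nabla' u|^2/u^2$ with the Cauchy--Schwarz bound $|\nabla' u|^2/u^2 \leq u^{-1}\sum_{i,k}|\phi_{i\bar i k}|^2/(\lambda_i\lambda_k)$ (which matches precisely the $B=C$ sub-block of the third-order sum after using the symmetry $|\phi_{A\bar B C}|^2=|\phi_{C\bar B A}|^2$) yields
\[
\Delta'\log u \;\geq\; \frac{\Delta\log f - R}{u} + \frac{1}{u}\sum_{i,p}\frac{R_{p\bar p i\bar i}}{\lambda_p} + \frac{G_1}{u},
\]
where $G_1 := \sum_{A,\,B\neq C}|\phi_{A\bar B C}|^2/(\lambda_A\lambda_B)\geq 0$ is the leftover ``off-diagonal'' third-order term, and the bisectional curvature sum is nonnegative by hypothesis. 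Using $-\lambda\Delta'\phi = -\lambda m + \lambda\sum 1/\lambda_i$ and $\Delta' H\leq 0$ at $p_0$ produces
\[
\lambda\sum_i\frac{1}{\lambda_i} + \frac{G_1}{u} \;\leq\; \lambda m + \frac{R - \Delta\log f}{u}.
\]

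The assumption $\Delta f^{1/(m-1)}\geq -A$ unpacks by chain rule to $\Delta\log f \geq -(m-1)A/F - |\nabla f|^2/[(m-1)f^2]$, where $F = f^{1/(m-1)}$; both terms can be unbounded when $f$ is small. The first, $(m-1)A/(Fu)$, I plan to absorb into $\lambda\sum 1/\lambda_i$ via Maclaurin's inequality $\sum_i 1/\lambda_i \geq m f^{-1/m}$, which forces $1/F = f^{-1/(m-1)}$ to be dominated by a suitable power of $\sum 1/\lambda_i$; the constants then work out once $\lambda$ is taken large enough. The second I rewrite via the once-differentiated equation $(\log f)_k = \sum_i \phi_{i\bar i k}/\lambda_i$, which gives $|\nabla f|^2/f^2 = \sum_k\bigl|\sum_i \phi_{i\bar i k}/\lambda_i\bigr|^2$ as a sum of squared third derivatives, and attempt to match these against $G_1/u$ by a weighted Cauchy--Schwarz.

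This last absorption is the main obstacle and is where the exponent $1/(m-1)$ is borderline. A naive Cauchy--Schwarz gives $|\nabla f|^2/f^2 \leq m\sum_{i,k}|\phi_{i\bar i k}|^2/\lambda_i^2$; the off-diagonal pieces with $i\neq k$ do lie inside $G_1$ (they are the $A=B=i,\, C=k$ entries with $B\neq C$), but the diagonal pieces $|\phi_{i\bar i i}|^2/\lambda_i^2$ belong to the $A=B=C$ block of the third-order sum, which was already consumed by $|\nabla' u|^2/u^2$ in the previous step and is therefore not present in $G_1$. To deal with these I would apply the differentiated equation a second time at $k=i$ to solve $\phi_{i\bar i i}/\lambda_i = (\log f)_i - \sum_{j\neq i}\phi_{j\bar j i}/\lambda_j$, trading each diagonal third derivative for a combination of off-diagonal ones (which are in $G_1$) plus $(\log f)_i$; the latter reintroduces the very term I am trying to control, so the argument closes only when the resulting numerical coefficients satisfy a strict inequality, the slack being provided by taking $\lambda$ large and by the nonnegative bisectional curvature contribution. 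This is precisely the ``careful handling of third-order terms'' the author refers to. Once the inequality closes, $u(p_0)$ is bounded by a constant depending only on $(M,g)$, $A$, and $\sup f$; since $\phi$ has bounded oscillation we conclude $\sup_M u \leq e^{\lambda(\sup\phi-\inf\phi)}\,u(p_0) \leq C$, which is the desired estimate.
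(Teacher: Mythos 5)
Your test function is equivalent to the paper's (since $\log[(m+\Delta\phi)e^{-\alpha(\phi)}]=\log(m+\Delta\phi)-\alpha(\phi)$), and the maximum-principle set-up at $p_0$ is standard. The genuine gap is in the final absorption, and you flag it yourself: taking $\partial\bar\partial$ of $\log\det g'$ forces the hypothesis to enter as $\Delta\log f$, which by the chain rule produces the term $-|\nabla f|^2/[(m-1)f^2]$. Your plan is then to show $\frac{1}{m-1}|\nabla\log f|^2\leq G_1$. That computation does not close. After Cauchy--Schwarz on $|\nabla\log f|^2=\sum_k\bigl|\sum_i\phi_{i\bar i k}/\lambda_i\bigr|^2$ the diagonal remainders $|\phi_{i\bar i i}|^2/\lambda_i^2$ are outside $G_1$, and substituting $\phi_{i\bar i i}/\lambda_i=(\log f)_i-\sum_{j\neq i}\phi_{j\bar j i}/\lambda_j$ reintroduces $\sum_i|(\log f)_i|^2=|\nabla\log f|^2$ with coefficient at least $\frac{m}{m-1}$ on the right-hand side, versus the coefficient $\frac{1}{m-1}$ you need to bound on the left. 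The inequality therefore cannot be rearranged. Neither of the two sources of slack you invoke can repair this: $\lambda$ multiplies only $\sum 1/\lambda_i$ and never touches the third-order cancellation, and the nonnegative bisectional curvature was simply discarded as a help term --- it is not available to absorb gradient contributions.

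The paper avoids this obstacle in two ways, and both ideas are missing from your proposal. First, it differentiates $(\det g')^{1/(m-1)}=f^{1/(m-1)}(\det g)^{1/(m-1)}$ directly rather than taking logarithms, so the hypothesis $\Delta f^{1/(m-1)}\geq -A$ enters intact and $|\nabla f|^2/f^2$ never appears; the factor $\frac{1}{m-1}$ is instead built into the third-order quantity \eqref{key_terms}. Second, the control of \eqref{key_terms} is not achieved by any weighted Cauchy--Schwarz but by a sign-based case analysis on the real third derivatives $\phi_{i\bar i x}(p_0)$: if both signs occur, partitioning the index set into the positive and negative parts --- each of size at most $m-1$ --- makes the expression nonpositive via $(\sum_{i\in I}a_i)^2\leq (m-1)\sum_{i\in I}a_i^2$; if all signs agree, the critical-point gradient equation $(\Delta\phi)_k=\alpha'\phi_k(m+\Delta\phi)$ bounds each $\phi_{i\bar i x}$ by $2\alpha'|\nabla\phi|(m+\Delta\phi)$, and the residual is reabsorbed by choosing $\alpha(x)=C_0^{-1}\log x$ so that $\alpha''+C_0(\alpha')^2=0$. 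Your argument never exploits the gradient equation, which is exactly the ingredient needed to handle the all-same-sign case.
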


\begin{proof}

\indent We will estimate the maximum value of the following test function 
\begin{equation}
H = (m + \Delta \phi) e^{ - \alpha (\phi)},
\end{equation}
\noindent where $\alpha: [2, \lambda] \rightarrow \mathbb{R}$ is a function that will be specified later. In view of the $L^\infty$ estimate, we may shift $\phi$ by a constant and assume that $\phi(p) \in [2,\lambda]$ for all $p \in M$.  We start by computing the first two derivatives of $H$.
$$H_\gamma = (\Delta \phi)_\gamma e^{-\alpha(\phi)} - \alpha' (m+\Delta\phi)\phi_\gamma e^{-\alpha(\phi)}, $$
\begin{align}
H_{\gamma \bar{\gamma}} &= \left( (\Delta \phi)_{\gamma \bar{\gamma}} - \alpha' (m + \Delta \phi) \phi_{\gamma \bar{\gamma}} - \alpha'' (m+ \Delta \phi) \phi_\gamma \phi_{\bar{\gamma}} \right) e^{- \alpha(\phi)} \notag \\
&+ \left( - \alpha' \left( \ (\Delta \phi)_\gamma \phi_{\bar{\gamma}} + \overline{(\Delta \phi)_\gamma} \ \phi_{\gamma} \right) + (\alpha')^2 (m+ \Delta \phi) \phi_\gamma \phi_{\bar{\gamma}} \right) e^{- \alpha(\phi)} . \label{Hgammagamma}
\end{align}
\indent Let $p \in M$ be the point where $H$ achieves its maximum value. Since the manifold is K\"{a}hler, we may choose coordinates such that at $p$ we have $g_{i \bar{j}} = \delta_{ij}$, $\frac{\partial}{\partial z^k} g_{i \bar{j}} = 0$ and $\phi_{i \bar{j}} = \delta_{ij} \phi_{i \bar{j}}$. At $p$, the gradient of $H$ is equal to zero, and hence
\begin{equation} \label{gradientzero}
(\Delta \phi)_\gamma = \alpha' \phi_\gamma (m + \Delta \phi).
\end{equation}
\indent We recall the notation \eqref{g_primeprime}. Since $g'$ defines a K\"ahler metric on $M$, we denote $\Delta' = g'^{i \bar{j}} \partial_i \bar{\partial_j}$ to be the Laplacian of $(M, g')$. By the maximum principle, $\Delta' H (p) \leq 0$, hence if we use this fact while substituting the gradient equation \eqref{gradientzero} into \eqref{Hgammagamma}, we obtain
\begin{align}
\Delta' \Delta \phi &\leq \alpha' (m + \Delta \phi) \Delta' \phi + (\alpha')^2 (m + \Delta \phi) g'^{i \bar{i}} \phi_i \phi_{\bar{i}} \notag \\
&+ \alpha'' (m + \Delta \phi)g'^{i \bar{i}} \phi_i \phi_{\bar{i}} . \label{maxprince}
\end{align}
\indent Next, we raise both sides of \eqref{mongeampere} to the power of $1/(m-1)$ and take derivatives. This yields
\begin{align*}
&(m-1)(\det g_{i \bar{j}})^{\frac{1}{m-1}} \partial_\gamma f^{\frac{1}{m-1}} + f^{\frac{1}{m-1}} (\det g_{i \bar{j}})^{\frac{1}{m-1}} g^{i \bar{j}} \partial_\gamma g_{i \bar{j}}\\
&=  (\det g'_{i \bar{j}})^{\frac{1}{m-1}} g'^{i \bar{j}} ( \partial_\gamma g_{i \bar{j}} + \phi_{i \bar{j} \gamma}).
\end{align*}
\indent We then take another derivative of the previous equation, noting that our choice of coordinates will simplify the expression.
\begin{align*}
&(m-1)\partial_\gamma \bar{\partial}_\gamma f^{\frac{1}{m-1}} + f^{\frac{1}{m-1}} \delta^{i \bar{j}} \partial_\gamma \bar{\partial}_\gamma g_{i \bar{j}}\\
&= \frac{1}{m-1} f^{\frac{1}{m-1}} g'^{i \bar{i}} g'^{j \bar{j}} \phi_{i \bar{i} \gamma} \phi_{j \bar{j} \bar{\gamma}}\\
&+ f^{\frac{1}{m-1}}(g'^{i \bar{i}} (\partial_\gamma \bar{\partial}_\gamma g_{i \bar{i}} + \phi_{i \bar{i} \gamma \bar{\gamma}} ) + \phi_{i \bar{j} \gamma} \bar{\partial}_\gamma g'^{i \bar{j}} ).
\end{align*}
\indent Expanding out $\partial_\gamma g'^{i \bar{j}}$ and using the definition of the curvature tensor, we obtain
\begin{align}
&(m-1)f^{\frac{-1}{m-1}} \partial_\gamma \bar{\partial}_\gamma f^{\frac{1}{m-1}} - \delta^{i \bar{j}} R_{i \bar{j} \gamma \bar{\gamma}} \notag\\
&= \frac{1}{m-1} g'^{i \bar{i}} g'^{j \bar{j}} \phi_{i \bar{i} \gamma} \phi_{j \bar{j} \bar{\gamma}} - g'^{i \bar{i}} R_{i \bar{i} \gamma \bar{\gamma}} + g'^{i \bar{i}} \phi_{i \bar{i} \gamma \bar{\gamma}} - g'^{i \bar{i}} g'^{j \bar{j}} \phi_{i \bar{j} \gamma} \phi_{j \bar{i} \bar{\gamma}}. \label{curvterms}
\end{align}
\indent Also, at the point in consideration we have
\begin{align*}
\Delta' \Delta \phi &= g'^{k \bar{l}} \partial_k \bar{\partial}_l (g^{i \bar{j}} \phi_{i \bar{j}})\\
&= g'^{k \bar{l}}  \phi_{i \bar{j}} \partial_k \bar{\partial}_l g^{i \bar{j}} + g'^{k \bar{l}} g^{i \bar{j}} \phi_{i \bar{j} k \bar{l}}\\
&= -g'^{k \bar{l}} g^{i \bar{t}} g^{n \bar{j}}  \phi_{i \bar{j}} \partial_k \bar{\partial}_l g_{n \bar{t}} + g'^{k \bar{l}} g^{i \bar{j}} \phi_{i \bar{j} k \bar{l}}\\
&= \sum_{k, i} g'^{k \bar{k}} \phi_{i \bar{i}} R_{i \bar{i} k \bar{k}} + \sum_{k, i} g'^{i \bar{i}} \phi_{i \bar{i} k \bar{k}}.
\end{align*}
\indent After summing the $\gamma$ in \eqref{curvterms} and substituting the previous identity, one obtains the following at the point $p$:
\begin{align*}
(m-1) f^{\frac{-1}{m-1}} \Delta f^{\frac{1}{m-1}} &= \Delta' \Delta \phi + \sum_k \frac{1}{m-1} g'^{i \bar{i}} g'^{j \bar{j}} \phi_{i \bar{i} k} \phi_{j \bar{j} \bar{k}} - \sum_k g'^{i \bar{i}} g'^{j \bar{j}} \phi_{\bar{i} j k} \phi_{i \bar{j} \bar{k}}\\
&- \sum_k g'^{i \bar{i}} (1 + \phi_{k \bar{k}}) R_{i \bar{i} k \bar{k}} +  \sum_{i,k} R_{i \bar{i} k \bar{k}}.
\end{align*}
\indent We substitute \eqref{maxprince}, define $S:= \sum_{i,k} R_{i \bar{i} k \bar{k}}$, and obtain
\begin{align}
(m-1) f^{\frac{-1}{m-1}} \Delta f^{\frac{1}{m-1}} &\leq \alpha' m (m + \Delta \phi) - \alpha' (m + \Delta \phi) \bigg( \sum_i g'^{i \bar{i}} \bigg) \notag\\
&+ (\alpha')^2 (m + \Delta \phi) g'^{i \bar{i}} \phi_i \phi_{\bar{i}} + \alpha'' (m + \Delta \phi)g'^{i \bar{i}} \phi_i \phi_{\bar{i}} \notag\\
&+ \sum_k \frac{1}{m-1} g'^{i \bar{i}} g'^{j \bar{j}} \phi_{i \bar{i} k} \phi_{j \bar{j} \bar{k}} - \sum_k g'^{i \bar{i}} g'^{j \bar{j}} \phi_{\bar{i} j k} \phi_{i \bar{j} \bar{k}} \notag\\
&- \inf_{i,k} R_{i \bar{i} k \bar{k}} (m + \Delta \phi) \bigg( \sum_i g'^{i \bar{i}} \bigg)  + S. \label{b4_using_pos_bi}
\end{align}
\indent If $M$ has nonnegative orthogonal bisectional curvature, then $\inf_{i,k} R_{i \bar{i} k \bar{k}}$ is nonnegative and the term involving it can be dropped. We are left with
\begin{align}
(m-1) f^{\frac{-1}{m-1}} \Delta f^{\frac{1}{m-1}} &\leq  \alpha' m (m + \Delta \phi) +S - \alpha' (m + \Delta \phi) \bigg( \sum_i g'^{i \bar{i}} \bigg) \notag\\
&+ (\alpha''+ (\alpha')^2) (m+\Delta \phi) g'^{i \bar{i}} \phi_i \phi_{\bar{i}} \notag\\
&+ \sum_k \frac{1}{m-1} g'^{i \bar{i}} g'^{j \bar{j}} \phi_{i \bar{i} k} \phi_{j \bar{j} \bar{k}} - \sum_k g'^{i \bar{i}} g'^{j \bar{j}} \phi_{\bar{i} j k} \phi_{i \bar{j} \bar{k}}.
\label{pos_bisec1} \end{align}
\indent The troublesome terms are those involving third order derivatives, and we shall follow the argument of P. Guan in ~\cite{guan} to control the following quantity for a fixed $k$:
\begin{equation} \label{key_terms}
\frac{1}{m-1} g'^{i \bar{i}} g'^{j \bar{j}} \phi_{i \bar{i} k} \phi_{j \bar{j} \bar{k}} - g'^{i \bar{i}} g'^{j \bar{j}} \phi_{\bar{i} j k} \phi_{i \bar{j} \bar{k}}.
\end{equation}
\indent First, we drop mixed terms $|\phi_{\bar{i} j k}|^2$ for $i \neq j$ and obtain
$$\frac{1}{m-1} g'^{i \bar{i}} g'^{j \bar{j}} \phi_{i \bar{i} k} \phi_{j \bar{j} \bar{k}} - g'^{i \bar{i}} g'^{j \bar{j}} \phi_{\bar{i} j k} \phi_{i \bar{j} \bar{k}} \leq \frac{1}{m-1} \bigg| g'^{i \bar{i}} \phi_{i \bar{i} k} \bigg|^2 - (g'^{i \bar{i}})^2 |\phi_{i \bar{i} k}|^2.$$
\indent We recall that $\phi_{i \bar{i}}(z)$ is a locally defined real-valued function. Also, $\phi_{i \bar{i} k} = \partial_{z^k} \phi_{i \bar{i}}$ where $\partial_{z^k} = \frac{1}{2} (\partial_{x^k} - i \partial_{y^k})$. Thus
$$|\phi_{i \bar{i} k}|^2 = \frac{1}{4}(\phi_{i \bar{i} x}^2 + \phi_{i \bar{i} y}^2),$$
\noindent where we write $f_x$ for $\partial_{x^k} f$, and there is no confusion since $k$ is fixed. Thus we get
\begin{align}
 \frac{4}{m-1} g'^{i \bar{i}} g'^{j \bar{j}} \phi_{i \bar{i} k} \phi_{j \bar{j} \bar{k}} - 4g'^{i \bar{i}} g'^{j \bar{j}} \phi_{\bar{i} j k} \phi_{i \bar{j} \bar{k}} &\leq \frac{1}{m-1} \left( \sum_i g'^{i \bar{i}} \phi_{i \bar{i} x} \right)^2  - \sum_i (g'^{i \bar{i}} \phi_{i \bar{i} x})^2\notag\\
&+ \frac{1}{m-1} \left( \sum_i g'^{i \bar{i}} \phi_{i \bar{i} y} \right)^2 - \sum_i (g'^{i \bar{i}} \phi_{i \bar{i} y})^2. \label{splitting} \end{align}
\indent We shall show how to control the terms containing real derivatives in the $x$ direction. Let $I = \{ 1 \leq i \leq m : \phi_{i \bar{i} x}(p) >0 \}$ and $J = \{ 1 \leq i \leq m : \phi_{i \bar{i} x}(p) < 0 \}$. We consider the two following cases. Case 1: $I$ and $J$ are both non-empty, or case 2: either $I$ or $J$ is empty. In case $1$, we have $|I| \leq m-1$ and $|J| \leq m-1$. Using $(\sum_i^n a_i)^2 \leq n \sum a_i^2$ for $a_i \geq 0$, we can compute the following:
\begin{align*}
&\frac{1}{m-1} ( \sum_i g'^{i \bar{i}} \phi_{i \bar{i} x})^2  - \sum_i (g'^{i \bar{i}} \phi_{i \bar{i} x})^2\\
&= \frac{1}{m-1} \left( ( \sum_I g'^{i \bar{i}} \phi_{i \bar{i} x})^2 +  ( \sum_J g'^{i \bar{i}} \phi_{i \bar{i} x})^2 + 2 ( \sum_I g'^{i \bar{i}} \phi_{i \bar{i} x}) ( \sum_J g'^{i \bar{i}} \phi_{i \bar{i} x}) \right)\\
&- \sum_I (g'^{i \bar{i}} \phi_{i \bar{i} x})^2 - \sum_J (g'^{i \bar{i}} \phi_{i \bar{i} x})^2\\
&\leq \frac{1}{m-1} \left( \sum_I g'^{i \bar{i}} \phi_{i \bar{i} x} \right)^2 - \sum_I (g'^{i \bar{i}} \phi_{i \bar{i} x})^2 + \frac{1}{m-1} \left( \sum_J g'^{i \bar{i}} \phi_{i \bar{i} x} \right)^2 - \sum_J (g'^{i \bar{i}} \phi_{i \bar{i} x})^2\\
&\leq 0.
\end{align*}
\indent Case $2$ is a little bit more delicate. Without loss of generality, we assume that $J = \emptyset$. Therefore, $\phi_{i \bar{i} x}(p) > 0$ for all $i$. Using \eqref{gradientzero}, we obtain the following at $p$:
\begin{equation} \label{key_estimate}
\phi_{i \bar{i} x} \leq \sum_{j=1}^m \phi_{j \bar{j} x} = 2 Re (\frac{\partial}{\partial z^k} \Delta \phi) \leq 2 |(\Delta \phi)_k| \leq 2 \alpha' |\nabla \phi| (m + \Delta \phi).
\end{equation}
\indent We now compute
\begin{align*}
&\frac{1}{m-1} ( \sum_i g'^{i \bar{i}} \phi_{i \bar{i} x})^2  - \sum_i (g'^{i \bar{i}} \phi_{i \bar{i} x})^2\\
&= \frac{1}{m-1} ( \sum_{i=1}^{m-1} g'^{i \bar{i}} \phi_{i \bar{i} x} + g'^{m \bar{m}} \phi_{m \bar{m} x})^2  - \sum_{i=1}^m (g'^{i \bar{i}} \phi_{i \bar{i} x})^2\\
&= \frac{2}{m-1} g'^{m \bar{m}} \phi_{m \bar{m} x}  \sum_{i=1}^{m-1} g'^{i \bar{i}} \phi_{i \bar{i} x} + \frac{1}{m-1} ( \sum_{i=1}^{m-1} g'^{i \bar{i}} \phi_{i \bar{i} x})^2 - \sum_{i=1}^{m-1} (g'^{i \bar{i}} \phi_{i \bar{i} x})^2\\
&+ \frac{1}{m-1} (g'^{m \bar{m}} \phi_{m \bar{m} x})^2 - (g'^{m \bar{m}} \phi_{m \bar{m} x})^2.
\end{align*}
\indent Without loss of generality, we can assume $\phi_{m \bar{m}}(p) \geq \phi_{i \bar{i}}(p)$ for all $i$. Therefore, using \eqref{key_estimate} we have
\begin{align*}
\frac{1}{m-1} ( \sum_i g'^{i \bar{i}} \phi_{i \bar{i} x})^2  - \sum_i (g'^{i \bar{i}} \phi_{i \bar{i} x})^2 &\leq \frac{2}{m-1} g'^{m \bar{m}} \phi_{m \bar{m} x}  \sum_{i=1}^{m-1} g'^{i \bar{i}} \phi_{i \bar{i} x}\\
&\leq  \frac{8}{m-1} (\alpha')^2 |\nabla \phi|^2 (m + \Delta \phi)^2 g'^{m \bar{m}} \sum_{i=1}^{m-1} g'^{i \bar{i}}\\
&\leq \frac{8m}{m-1} (\alpha')^2 |\nabla \phi|^2 (m + \Delta \phi) \sum_{i=1}^{m} g'^{i \bar{i}} .
\end{align*}
\indent The terms involving $y$ derivatives in \eqref{splitting} can be controlled in the same way as the $x$ derivatives. Thus combining both cases and \eqref{splitting}, we obtain
\begin{align}
&\frac{1}{m-1} g'^{i \bar{i}} g'^{j \bar{j}} \phi_{i \bar{i} k} \phi_{j \bar{j} \bar{k}} - g'^{i \bar{i}} g'^{j \bar{j}} \phi_{\bar{i} j k} \phi_{i \bar{j} \bar{k}} \notag\\
&\leq \frac{4m}{m-1} (\alpha')^2 |\nabla \phi|^2 (m + \Delta \phi) \sum_{i=1}^{m} g'^{i \bar{i}}. \label{good_estimate}
\end{align}
\indent We substitute \eqref{good_estimate} into \eqref{pos_bisec1} and obtain
\begin{align}
(m-1) f^{\frac{-1}{m-1}} \Delta f^{\frac{1}{m-1}} &\leq  \alpha' m (m + \Delta \phi) +S - \alpha' (m + \Delta \phi) ( \sum_i g'^{i \bar{i}} ) \notag\\
&+ \left(\alpha''+ (\alpha')^2(1 + \frac{4m^2}{m-1}) \right) (m+\Delta \phi) |\nabla \phi|^2 ( \sum_i g'^{i \bar{i}}).
\label{pos_bisec2} \end{align}
\indent Denote $C_0 := 1 + 4m^2/(m-1)$. Following an idea of Blocki in his gradient estimate ~\cite{blocki}, we pick $\alpha(x) = (C_0)^{-1} \log x$. We know that $\alpha(\phi)$ is well-defined, since $\phi$ was renormalized such that $2 \leq \phi \leq \lambda$. This choice of $\alpha$ yields $\alpha'' + C_0 (\alpha')^2 = 0$ and hence we are left with
\begin{equation}
(m-1) f^{\frac{-1}{m-1}} \Delta f^{\frac{1}{m-1}} \leq  \frac{1}{2 C_0} m (m + \Delta \phi) +S - \frac{1}{C_0 \lambda} (m + \Delta \phi) ( \sum_i g'^{i \bar{i}} ).
\label{pos_bisec3} \end{equation}
\indent We next notice that for $B_i > 0$, the following inequality holds
$$\left( \sum_{i=1}^m \frac{1}{B_i} \right)^{m-1} \geq \frac{\sum_{i=1}^m B_i}{\prod_{i=1}^m B_i}.$$
\indent Since $g^{i \bar{i}} > 0$, we thus have at the point $p$,
\begin{equation} \label{m-1} \sum_{i} g'^{i \bar{i}} \geq  \left( \frac{m+ \Delta \phi}{f} \right)^{\frac{1}{m-1}}.\end{equation}
\indent Substituting \eqref{m-1} into \eqref{pos_bisec3} and using the definition of $A$, we get
\begin{align*}
A(m-1) &\geq \frac{1}{C_0 \lambda} (m + \Delta \phi)^{1 + \frac{1}{m+1}} - S \sup_M f^{\frac{1}{m+1}}\\
&- \left( \frac{m}{2 C_0} \sup_M f^{\frac{1}{m+1}}  \right) (m + \Delta \phi).
\end{align*}
\indent Thus there are constants $C_1$,$C_2$ under control such that
$$ (m + \Delta \phi(p))^{1 + 1/(m-1)} \leq C_1 (m + \Delta \phi(p)) + C_2.$$
\indent It follows that there exists a constant $C_3$ under control such that
$$m + \Delta \phi (p) \leq C_3.$$
\indent Now that we have control of $(m + \Delta \phi)$ at $p$, we have control of $(m + \Delta \phi)$ at all $z \in M$. Indeed,
$$ (m + \Delta \phi(z)) e^{-\alpha(\phi(z))} \leq (m + \Delta \phi(p))  e^{-\alpha(\phi(p))} \leq C_3  e^{-\alpha(\phi(p))}.$$
\indent Since $\alpha(x) = C_0^{-1} \log x$, we have
$$ (m + \Delta \phi(z)) \leq C_3 (\frac{\lambda}{2})^{1/C_0}.$$
\end{proof}

\indent By dropping the assumption on the bisectional curvature of $M$, the curvature terms break down the previous argument. It is unknown whether Proposition \ref{my_only_result} holds without this condition on the curvature. Before ending this section, we give a partial result working towards the removal of this assumption. To attempt to control these curvature terms, we strengthen our hypothesis to match the direct gradient estimate given by Blocki in ~\cite{blocki} or P. Guan in ~\cite{guan2}: we assume $f^{1/m}$ is Lipschitz continuous. In the case $m=2$, this additional assumption makes dealing with the terms \eqref{key_terms} particularly easy, and we can thus obtain the following estimate.

\begin{prop} Let $(M,g)$ be a closed, compact K\"ahler manifold of dimension $m=2$. Let $f>0$ be a positive function on $M$ such that $\inf_M \Delta f \geq -A$ for some constant $A$, and $f^{\frac{1}{2}}$ is Lipschitz. For all $\phi \in C^4(M)$ satisfying \eqref{mongeampere} such that $(\phi_{i \bar{j}} + g_{i \bar{j}})$ is positive-definite, we have
\begin{equation} | \Delta \phi | \leq C, \end{equation}
\noindent where $C$ depends on $(M,g)$, $(\sup \phi - \inf \phi)$, $A$, the Lipschitz constant of $f^{\frac{1}{2}}$, and $\sup f$. 
\end{prop}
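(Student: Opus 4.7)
The plan is to parallel the proof of Proposition~\ref{my_only_result}, making use of two features of the present setting unavailable in the general case: an a priori gradient estimate furnished by the Lipschitz hypothesis, and a direct control on the third-order terms~\eqref{key_terms} specific to $m=2$.

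First I would invoke the direct $C^1$ estimate of Blocki~\cite{blocki} (or P.~Guan~\cite{guan2}): the Lipschitz bound on $f^{1/m}$ produces, via a maximum-principle argument on $|\nabla\phi|^2$, an a priori estimate $|\nabla\phi|\le C_\nabla$ with $C_\nabla$ depending only on $(M,g)$, $\sup\phi-\inf\phi$, $\sup f$, and the Lipschitz constant. I would then apply the maximum principle to the same test function $H=(m+\Delta\phi)e^{-\alpha(\phi)}$ as in Proposition~\ref{my_only_result} and follow the derivation through~\eqref{b4_using_pos_bi}. Since nonnegative orthogonal bisectional curvature is no longer assumed, the term $-\inf_{i,k}R_{i\bar ik\bar k}(m+\Delta\phi)\sum_i g'^{i\bar i}$ must be retained; writing $R_{\min}=\inf_{i,k}R_{i\bar ik\bar k}$, it contributes at worst $|R_{\min}|(m+\Delta\phi)\sum_i g'^{i\bar i}$ to the right-hand side.

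The dimension-two simplification concerns the troublesome expression~\eqref{key_terms}. Differentiating~\eqref{mongeampere} at the point $p$ yields $\sum_i g'^{i\bar i}\phi_{i\bar ik}=(\log f)_k$; since $1/(m-1)=1$ for $m=2$ and $\sum_{i,j}g'^{i\bar i}g'^{j\bar j}|\phi_{\bar ijk}|^2\ge 0$, summing \eqref{key_terms} over $k$ gives the direct bound
\begin{equation*}
\sum_k\Bigl[\tfrac{1}{m-1}\bigl|g'^{i\bar i}\phi_{i\bar ik}\bigr|^2 - \sum_{i,j}g'^{i\bar i}g'^{j\bar j}|\phi_{\bar ijk}|^2\Bigr] \;\le\; \sum_k|(\log f)_k|^2 \;\le\; \frac{CL^2}{f},
\end{equation*}
where $L$ is the Lipschitz constant of $f^{1/2}$. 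This bypasses the Case~1/Case~2 analysis of Proposition~\ref{my_only_result} entirely, replacing it with a single consequence of the Monge--Amp\`ere equation.

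Using the $C^1$ estimate to dominate $g'^{i\bar i}|\phi_i|^2\le\tfrac{1}{4}C_\nabla^2\sum_i g'^{i\bar i}$ and assembling everything, the maximum-principle inequality becomes, schematically,
\begin{equation*}
f^{-1}\Delta f \;\le\; 2\alpha'(2+\Delta\phi) + S + \frac{CL^2}{f} + \Bigl[-\alpha' + (\alpha'^2+\alpha'')\tfrac{C_\nabla^2}{4} + |R_{\min}|\Bigr](2+\Delta\phi)\sum_i g'^{i\bar i}.
\end{equation*}
One then selects $\alpha(\phi)$ (for instance linear or logarithmic in $\phi$, with coefficient calibrated in terms of $C_\nabla$ and $|R_{\min}|$) so that the bracketed coefficient is uniformly strictly negative on $\phi\in[2,\lambda]$. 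Exploiting Yau's inequality~\eqref{m-1}, which in fact becomes an equality $\sum_i g'^{i\bar i}=(2+\Delta\phi)/f$ when $m=2$, multiplying through by $f$ and invoking $\Delta f\ge -A$ produces a quadratic inequality $\beta(2+\Delta\phi)^2\le C_1(2+\Delta\phi)+C_2$, whence $\Delta\phi$ is bounded. The principal obstacle is this final step: the bracketed coefficient must be rendered negative despite the extra $|R_{\min}|$ contribution, and this balancing of $\alpha'$ against $(\alpha'^2+\alpha'')C_\nabla^2/4$ and $|R_{\min}|$ is precisely what the a priori gradient bound from the Lipschitz hypothesis makes possible.
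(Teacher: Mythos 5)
Your reduction of the troublesome terms~\eqref{key_terms} via the differentiated Monge--Amp\`ere identity $\sum_i g'^{i\bar i}\phi_{i\bar ik}=(\log f)_k$ and the bound $\sum_k|(\log f)_k|^2\le 4L^2/f$ is correct and agrees with the paper. The gap is in how you treat the two remaining pieces. You discard the nonpositive term $-\sum_k g'^{i\bar i}g'^{j\bar j}\phi_{\bar ijk}\phi_{i\bar j\bar k}$ outright, and you control the $(\alpha')^2(m+\Delta\phi)g'^{i\bar i}\phi_i\phi_{\bar i}$ term by the external gradient estimate $|\nabla\phi|\le C_\nabla$. Together these leave a bracketed coefficient of the form $-\alpha'+(\alpha'^2+\alpha'')C_\nabla^2+|R_{\min}|$ that you must render strictly negative on $[2,\lambda]$. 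No choice of $\alpha$ achieves this in general: for linear $\alpha$ the supremum of $\alpha'-(\alpha')^2C_\nabla^2$ is $1/(4C_\nabla^2)$, so one needs $|R_{\min}|<1/(4C_\nabla^2)$; for the optimal choice with $\alpha''+(\alpha')^2C_\nabla^2\le 0$, i.e.\ $\alpha'=1/(C_\nabla^2x+K)$, together with $\alpha'>|R_{\min}|$ on $[2,\lambda]$, one needs $C_\nabla^2(\lambda-2)|R_{\min}|<1$. Neither restriction holds a priori: $\inf_{i,k}R_{i\bar ik\bar k}$ is fixed by $(M,g)$, while $C_\nabla$ and $\lambda-2=\sup\phi-\inf\phi$ may each be large, growing with $\sup f$ and the Lipschitz constant of $f^{1/2}$.

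The paper avoids this obstruction entirely and never invokes a gradient estimate on $\phi$. Instead, it retains the negative third-order term and substitutes the critical-point identity~\eqref{gradientzero} directly into the $(\alpha')^2$ term, yielding $(\alpha')^2(m+\Delta\phi)g'^{i\bar i}\phi_i\phi_{\bar i}=(m+\Delta\phi)^{-1}g'^{i\bar i}(\Delta\phi)_i(\Delta\phi)_{\bar i}$, with $\alpha'$ cancelling completely. A Cauchy--Schwarz argument then shows this quantity is dominated by $\sum_{i,j,k}g'^{i\bar i}g'^{j\bar j}|\phi_{i\bar jk}|^2$, which is exactly the nonpositive term you discarded. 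After this cancellation the only requirement on the constant $\alpha_0$ is $\alpha_0+\inf_{i,k}R_{i\bar ik\bar k}>0$, which is always satisfiable and is decoupled from $C_\nabla$ and $\lambda$. Retaining the negative third-order term and using it to absorb the $(\alpha')^2$ contribution via~\eqref{gradientzero} is the idea your proposal is missing, and without it the final balancing step fails whenever $|R_{\min}|C_\nabla^2$ is not small.
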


\begin{proof}
We run the same argument as the proof of Proposition \ref{my_only_result} up until equation \eqref{b4_using_pos_bi}. In this case, we can simply let $\alpha(x)=\alpha_0 x$, where $0<\alpha_0$ is a constant. Equation \eqref{b4_using_pos_bi} becomes
\begin{align*}
f^{\frac{-1}{m-1}} \Delta ( f^{\frac{1}{m-1}} )  &\leq \alpha_0 m (m + \Delta \phi) +S - (\alpha_0 + \inf_{i,k} R_{i \bar{i} k \bar{k}}) (m + \Delta \phi) \bigg( \sum_i g'^{i \bar{i}} \bigg)\\ 
&+ (\alpha_0)^2 (m + \Delta \phi)  g'^{i \bar{i}} \phi_i \phi_{\bar{i}} + \sum_k \frac{1}{m-1} g'^{i \bar{i}} g'^{j \bar{j}} \phi_{i \bar{i} k} \phi_{j \bar{j} \bar{k}}\\
&- \sum_k g'^{i \bar{i}} g'^{j \bar{j}} \phi_{\bar{i} j k} \phi_{i \bar{j} \bar{k}}.
\end{align*}
\indent We see that if we choose $\alpha_0 > \inf_{i,k} R_{i \bar{i} k \bar{k}}$, the coefficient on the third term is negative. To eliminate the $\alpha_0^2$ term, we substitute the gradient equation \eqref{gradientzero}:
\begin{align*}
f^{\frac{-1}{m-1}} \Delta ( f^{\frac{1}{m-1}} )  &\leq \alpha_0 m (m + \Delta \phi) +S - (\alpha_0 + \inf_{i,k} R_{i \bar{i} k \bar{k}}) (m + \Delta \phi) \bigg( \sum_i g'^{i \bar{i}} \bigg)\\ 
&+ (m + \Delta \phi)^{-1} g'^{i \bar{i}} (\Delta \phi)_i (\Delta \phi)_{\bar{i}} + \sum_k \frac{1}{m-1} g'^{i \bar{i}} g'^{j \bar{j}} \phi_{i \bar{i} k} \phi_{j \bar{j} \bar{k}}\\
&- \sum_k g'^{i \bar{i}} g'^{j \bar{j}} \phi_{\bar{i} j k} \phi_{i \bar{j} \bar{k}}.
\end{align*}
\indent Using Cauchy-Bunyakowsky-Schwarz, we obtain
\begin{align*}
(m + \Delta \phi)^{-1} g'^{i \bar{i}} (\Delta \phi)_i (\Delta \phi)_{\bar{i}} &= (m + \Delta \phi)^{-1} \sum_i g'^{i \bar{i}} (\sum_k \phi_{k \bar{k} i})(\sum_k \phi_{k \bar{k} \bar{i}}) \\
&\leq (m + \Delta \phi)^{-1} \sum_i g'^{i \bar{i}} \left( \sum_k \frac{|\phi_{k \bar{k} i}|^2}{(1 + \phi_{k \bar{k}})} \right) \left(  \sum_k (1 + \phi_{k \bar{k}}) \right)\\
&= \sum_{i,k} g'^{i \bar{i}} g'^{k \bar{k}} | \phi_{k \bar{k} i} |^2\\
&\leq \sum_{i,j,k} g'^{i \bar{i}} g'^{j \bar{j}} | \phi_{i \bar{j} k} |^2.
\end{align*}
\indent We are left with
\begin{align*}
f^{\frac{-1}{m-1}} \Delta ( f^{\frac{1}{m-1}} )  &\leq \alpha_0 m (m + \Delta \phi) +S - (\alpha_0 + \inf_{i,k} R_{i \bar{i} k \bar{k}}) (m + \Delta \phi) \bigg( \sum_i g'^{i \bar{i}} \bigg)\\ 
&+ \sum_k \frac{1}{m-1} g'^{i \bar{i}} g'^{j \bar{j}} \phi_{i \bar{i} k} \phi_{j \bar{j} \bar{k}}.
\end{align*}
\indent It is at this point that we use the hypotheses that $m=2$ and $f^{1/2}$ is Lipschitz continuous. From taking the derivative of both sides of $(\det g'_{i \bar{j}})^{1/m} = (f \det g_{i \bar{j}})^{1/m}$, we see that $g'^{i \bar{j}} \phi_{i \bar{j} k} = 2 f^{-1/2} \partial_k f^{1/2}$. Therefore,
$$f^{-1} \Delta f  \leq 2 \alpha_0 (2 + \Delta \phi) +S - (\alpha_0 +\inf_{i,k} R_{i \bar{i} k \bar{k}} ) (2 + \Delta \phi) \bigg( \sum_i g'^{i \bar{i}} \bigg) + 4 \frac{|\nabla f^{1/2}|^2}{f}.$$
\indent Since we choose $\alpha_0$ such that $\alpha_0 + \inf_{i,k} R_{i \bar{i} k \bar{k}} >0$, we use \eqref{m-1} and get
$$-A \leq S \sup_M f + 4 ||\nabla f^{1/2}||_\infty^2 + 2 \alpha_0 (\sup_M f) (2 + \Delta \phi) - (\alpha_0 + \inf_{i,k} R_{i \bar{i} k \bar{k}}) (2 + \Delta \phi)^2.$$
\indent As shown in the previous argument, it follows that $(m + \Delta \phi) \leq C$.

\end{proof}

\section{Dirichlet Problem in $\mathbb{C}^m$}

\indent As an application of the {\it a priori} estimates shown previously, we shall solve a Dirichlet problem in $\mathbb{C}^m$, following the footsteps of ~\cite{ckns}. In order to prove Theorem \ref{dirichlet}, we will make use of  estimates previously established in the literature and combine them with our result.
\newline
\indent Let $\Omega$ be a strongly pseudo-convex domain and $u$ be a strictly pluri-subharmonic solution to \eqref{thm3}, where $f>0$. We let $\psi$ be a strictly pluri-subharmonic function on $\Omega$ such that $\psi = 0$ on $\partial \Omega$ and
$$ \det (\psi_{i \bar{j}}) > \sup_{\bar{\Omega}} f \geq \det ( u_{i \bar{j}}).$$
\indent By a maximum principle such as the one given in ~\cite{ckns}, we have have $\psi \leq u$ in $\overline{\Omega}$. To get a upper bound, we solve the Laplace equation for a harmonic function $h$: $\Delta h = 0$ in $\Omega$ and $h=0$ on $\partial \Omega$. Then since $\Delta u \geq 0$ in $\Omega$, we have $u \leq h$. Using $\psi \leq u \leq h$ in $\Omega$ and $\psi=u=h$ on $\partial \Omega$, we can obtain $|\nabla u(z)| \leq \max \{ |\nabla \psi(z)|, |\nabla h(z)| \}$ for all $z \in \partial \Omega$. Since $\psi$ and $h$ depend only on $\Omega$, we have a gradient estimate on the boundary. 
\newline
\indent To push the interior gradient estimate to the boundary as done in ~\cite{ckns}, we need to introduce the additional hypothesis that $f^{1/m}$ is Lipschitz. It is an open question to determine whether this assumption can be improved to requiring that $f^{1/(m-1)}$ is Lipschitz. Such a result would lead to a more natural statement for Theorem \ref{dirichlet}, which would be analogous to the result given in ~\cite{guan} for the real Monge-Amp\`{e}re equation. In our case, we assume the hypothesis that $|\nabla f^{1/m}| \leq A_1$, and thus have
$$|| u ||_{C^1(\overline{\Omega})} \leq C.$$
\indent We can obtain second order estimates of $u$ from our current result. Define $\phi := u - |z|^2$. We have $\det( \delta_{i \bar{j}} + \phi_{i \bar{j}} )= f \det \delta_{i \bar{j}}$. Consider the test function $H = (m+ \Delta \phi) e^{- \alpha (\phi)}$ from Proposition \ref{my_only_result}. If $H$ attains its maximum on $\partial \Omega$, then $|m+ \Delta \phi| \leq C || m+ \Delta \phi||_{L^\infty(\partial \Omega)}$. If $H$ attains its maximum at $p \in \Omega$, then we obtain $|m+ \Delta \phi| \leq C$ if we follow the proof of Proposition \ref{my_only_result} with $g_{i \bar{j}} = \delta_{i \bar{j}}$. Therefore, we have
$$||\Delta u||_{L^\infty(\overline{\Omega})} \leq C (1 + || \Delta u||_{L^\infty(\partial \Omega)}).$$
\indent Control of second order derivatives on $\partial \Omega$ follows from the argument in ~\cite{ckns}, and this argument relies on a $C^1$ estimate and $f^{1/m}$ to be Lipschitz. Therefore, we have the following result.

\begin{prop} \label{apriori_dirichlet}
Let $\Omega$ be a strongly pseudo-convex domain in $\mathbb{C}^m$. Let $f: \Omega \rightarrow \mathbb{R}$ be a function such that $f > 0$, $|\nabla f^{1/m}| \leq A_1$, and $\Delta f^{\frac{1}{m-1}} \geq -A_2$. Suppose there exists a strictly pluri-subharmonic solution $u \in C^\infty(\bar{\Omega})$ such that 
$$ \det u_{i \bar{j}}(z) = f(z) \ \ {\rm in } \ \Omega,$$
$$ u = 0 \ \ {\rm on} \ \partial \Omega.$$
\indent Then there exists a constant $C$ which depends only on $\Omega$, $\sup(f)$, $A_1$ and $A_2$ such that
$$ ||u||_{C^1(\overline{\Omega})} + ||\Delta u||_{L^\infty(\overline{\Omega})} \leq C.$$
\
\end{prop}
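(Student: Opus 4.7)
The plan is to establish the $C^0$, $C^1$, and Laplacian estimates in turn, the interior Laplacian bound being reduced to the argument of Proposition \ref{my_only_result} once the $C^1$ and boundary $C^2$ controls have been secured. For the $L^\infty$ estimate, I would first construct a strictly pluri-subharmonic barrier $\psi$ with $\psi = 0$ on $\partial \Omega$ and $\det \psi_{i \bar j} > \sup_{\bar\Omega} f$ (for instance, $\psi = K r$ for the defining function $r$ of the strongly pseudo-convex $\Omega$ and $K$ sufficiently large); the comparison principle for the complex Monge-Amp\`ere operator, as recorded in ~\cite{ckns}, then gives $\psi \leq u$ on $\bar\Omega$. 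For the upper barrier, let $h$ solve $\Delta h = 0$ in $\Omega$ with $h = 0$ on $\partial \Omega$; since $u$ is pluri-subharmonic, $\Delta u \geq 0$, hence $u \leq h$. Together these yield $\|u\|_{L^\infty(\bar\Omega)} \leq C(\Omega, \sup f)$.

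Next, from $\psi \leq u \leq h$ combined with $\psi = u = h$ on $\partial \Omega$, the normal derivative of $u$ at each boundary point is pinched between those of $\psi$ and $h$, producing $|\nabla u| \leq C$ on $\partial\Omega$. To transport the boundary gradient control into the interior, I would invoke the argument of Caffarelli--Kohn--Nirenberg--Spruck in ~\cite{ckns}; this step is where the assumption $|\nabla f^{1/m}| \leq A_1$ enters, since the interior gradient estimate in their framework requires $f^{1/m}$ to be Lipschitz. The outcome is $\|u\|_{C^1(\bar\Omega)} \leq C$.

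For the Laplacian estimate, I would set $\phi := u - |z|^2$, so that $\det(\delta_{i \bar j} + \phi_{i \bar j}) = f$ with $(\delta_{i \bar j} + \phi_{i \bar j})$ positive definite on $\Omega$. Apply the test function $H = (m + \Delta\phi)\, e^{-\alpha(\phi)}$ used in the proof of Proposition \ref{my_only_result}. Since $\bar\Omega$ is compact, $H$ attains its maximum at some $p \in \bar\Omega$. If $p \in \Omega$, then the proof of Proposition \ref{my_only_result} goes through verbatim with $g_{i\bar j} = \delta_{i\bar j}$ (the flat metric trivially has nonnegative orthogonal bisectional curvature, and the hypothesis $\Delta f^{1/(m-1)} \geq -A_2$ is precisely what is needed), yielding $(m + \Delta \phi)(p) \leq C$ and hence a global bound. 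If $p \in \partial \Omega$, then $\|m + \Delta \phi\|_{L^\infty(\bar\Omega)}$ is controlled by its boundary sup up to a multiplicative constant. Either way,
\[\|\Delta u\|_{L^\infty(\bar\Omega)} \leq C\bigl(1 + \|\Delta u\|_{L^\infty(\partial \Omega)}\bigr).\]

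The main obstacle is therefore the boundary second-order estimate, which does not follow from the interior machinery developed in Section 3. I would handle it by invoking the tangential-tangential, tangential-normal, and normal-normal barrier constructions of ~\cite{ckns}; the doubly-normal derivative is the delicate one, since the Monge-Amp\`ere equation itself must be differentiated along the boundary and inverted to extract the missing derivative, but this step again uses only the already-established $C^1$ bound together with the Lipschitz regularity of $f^{1/m}$. Combining this boundary Laplacian bound with the inequality above closes the argument and yields the claimed estimate.
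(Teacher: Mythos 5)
Your proposal follows essentially the same route as the paper: the barrier pair $\psi \leq u \leq h$ for the $C^0$ and boundary gradient bounds, the CKNS machinery (using $|\nabla f^{1/m}| \leq A_1$) for the global $C^1$ estimate and the boundary second-order estimate, and the test function $H = (m+\Delta\phi)e^{-\alpha(\phi)}$ with $\phi = u - |z|^2$ applied via Proposition \ref{my_only_result} in the flat metric to handle an interior maximum. The argument is correct and matches the paper's proof in all essential steps.
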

\ 
\indent Using Proposition \ref{apriori_dirichlet}, we shall now prove Theorem \ref{dirichlet}. The strategy will be to solve the non-degenerate Dirichlet problem for $f>0$, and then use a limiting process. Let $g_\epsilon = f^{\frac{1}{m-1}} + \epsilon$, with $\epsilon >0$. We extend $f$ such that it is defined on all of $\mathbb{C}^m$. Let $\gamma_\rho = \gamma(|z|/\rho)$, where $\gamma: \mathbb{C}^m \rightarrow \mathbb{R}$ is a $C^\infty$ function of compact support such that $0 \leq \gamma \leq 1$, $\int_{\mathbb{C}^m} \gamma = 1$. We define $h_{\epsilon,\rho}: \Omega \rightarrow \mathbb{R}$ in the following way: 
$$h_{\epsilon,\rho}(x) = ( g_\epsilon * \gamma_\rho (x))^{m-1}.$$
\indent Since $\overline{\Omega}$ is compact, we know that $g_\epsilon * \gamma_\rho \rightarrow g_\epsilon$ uniformly on $\overline{\Omega}$. For $\rho, \epsilon >0$ small enough, it can be shown that 
$$ |\nabla h_{\epsilon,\rho}^{1/m}| \leq 2 A_1 , \ \ \Delta h_{\epsilon,\rho}^{1/(m-1)} \geq -A_2.$$
\indent Now, we consider the non-degenerate Monge-Amp\`{e}re Dirichlet problem
$$\det (u_{\epsilon,\rho})_{i \bar{j}} = h_{\epsilon,\rho} \ \ {\rm in } \ \Omega,$$
$$ u_{\epsilon,\rho} = 0 \ \ {\rm on} \ \partial \Omega.$$
\indent By ~\cite{ckns}, since $h_{\epsilon,\rho}$ is smooth, we know that there exists a smooth strictly pluri-subharmonic solution $u_{\epsilon,\rho}$. By Proposition \ref{apriori_dirichlet}, we have
$$||u_{\epsilon,\rho}||_{C^1{(\overline{\Omega})}} + ||\Delta u_{\epsilon,\rho}||_\infty \leq C,$$
\noindent for some constant $C$ independent of $\epsilon$ and $\rho$. We let $\rho \rightarrow 0$ and obtain a strictly pluri-subharmonic solution $u_{\epsilon}$ of 
$$\det (u_{\epsilon})_{i \bar{j}} = (f^{\frac{1}{m-1}} + \epsilon)^{m-1} \ \ {\rm in } \ \Omega,$$
$$ u_{\epsilon} = 0 \ \ {\rm on} \ \partial \Omega.$$
\noindent such that $||u_{\epsilon}||_{C^1{(\overline{\Omega})}} + ||\Delta u_{\epsilon}||_\infty \leq C.$ Finally, we let $\epsilon \rightarrow 0$ and obtain a pluri-subharmonic solution $u$ of \eqref{dirichlet} such that $||u||_{C^1{(\overline{\Omega})}} + ||\Delta u||_\infty \leq C.$ Uniqueness is well-known and can be found in ~\cite{ckns}.

\end{document}